\documentclass[11pt,draft]{article}
\usepackage[utf8]{inputenc}
\usepackage{amsmath}
\usepackage{enumerate}
\usepackage{graphicx}
\usepackage{graphics}
\usepackage[english,spanish]{babel}
\usepackage{amsmath,amssymb,amsfonts,amstext,amsthm}
\usepackage{mathrsfs}
\usepackage{latexsym}
\usepackage{multicol}

\newtheorem{defin}{Definición}[section]
\newtheorem{prop}{Proposición}[section]
\newtheorem{teor}{Teorema}[section]
\newtheorem{corol}{Corolario}[section]

\newtheorem{lema}{Lema}[section]
\theoremstyle{definition}

\newtheorem{remark}{Observación}[section]

\hoffset-0.3in
\voffset-1.3cm \setlength{\oddsidemargin}{9mm}
\setlength{\textheight}{20.3cm}\setlength{\textwidth}{15cm}

\title{\textbf{Un teorema de Serre para el anillo de series formales torcidas}}
\author{Edward O. Latorre Acero \footnote{eolatorrea@unal.edu.co}\\ Seminario de Álgebra Constructiva - SAC$^2$\\
Departamento de Matemáticas\\ Universidad
Nacional de Colombia, Sede Bogotá}
\date{}
\begin{document}
\maketitle
\begin{abstract}
  En este artículo se muestra que si $R$ es un anillo noetheriano y regular a izquierda tal que todo $R$-módulo izquierdo proyectivo finitamente generado es establemente libre, entonces lo mismo se cumple para la completación $R[[x;\sigma,\delta]]$ de cualquier extensión de Ore $R[x;\sigma,\delta]$ del anillo $R$. Se asumen adicionalmente ciertas condiciones naturales para $\sigma$ y $\delta$ que garantizan que tal completación esta bien definida. La demostración requiere comparar los respectivos grupos de Grothendieck $K_{0}$ de $R$ y de $S$ haciendo uso de algunos resultados previos encontrados en \cite{lam}, \cite{Mc}, \cite{SchV} y \cite{Ven}.\\
\begin{center}
 \textbf{Abstract}
\end{center}
  In this paper we prove that if $R$ is a left Noetherian and left regular ring such that all finitely generated projective left $R$-modules are stably free, then the same is true for the completion $R[[x;\sigma,\delta]]$ of any Ore extension $R[x;\sigma,\delta]$ of $R$. Additionally assumes certain natural conditions for $\sigma$  and $\delta$ for ensure that this completion are good defined. The proof compares the Grothendieck groups $K_{0}$ of $R$ and $S$ using some results find in \cite{lam}, \cite{Mc}, \cite{SchV} and \cite{Ven}.\\
\bigskip

\noindent \textit{Key words and phrases.} Noetherian and regular noncommutative rings, graded and filtered
rings and modules, skew power series ring, Serre's theorem, Grothendieck group.

\bigskip

\noindent 2000 \textit{Mathematics Subject Classification.}
Primary: 16S80, 16W35, 16S36, 16U20. Secondary: 16W50, 16E65.
\end{abstract}
\section{Introducción}

En este artículo se estudia el teorema de Serre sobre módulos establemente libres para el anillo $R[[x;\sigma,\delta]]$ de series formales torcidas, motivados por el correspondiente resultado considerado en \cite{lz} para una gama muy amplia de anillos y álgebras que incluyen estensiones de Ore, álgebras de operadores diferenciales, álgebras envolventes de álgebras de Lie, álgebras cuánticas, álgebras de difusión, entre muchos otros.\\

En la segunda sección se presenta el problema de Serre y el anillo de polinomios torcidos $T=R[x,\sigma,\delta]$ junto a su completación; el anillo de series formales torcidas $S=R[[x;\sigma,\delta]]$, el cual ha sido introducido en el estudio de la teoría de álgebras cuánticas \cite{Kash} y en la teoría de Iwasawa no conmutativa \cite{SchV} y \cite{Ven}. Haciendo uso de la técnica de filtración-graduación se presentan algunas propiedades homológicas que se conservan entre el anillo base y su extensión. Para la técnica de filtración-graduación se puede consultar \cite{Mc}.\\

En la tercera sección se hace una presentación del grupo de Grothendieck $K_{0}$ para un anillo $R$ en general, siguiendo la exposición dada por \cite{lam}. Se muestran dos resultados bien conocidos relativos a esta estructura, necesarios en la prueba central del presente artículo.\\

En la cuarta sección se muestra la versión del teorema de Serre para la completación $S$ de la extensión de Ore $T$. Exponemos algunos ejemplos de \cite{let} y \cite{LW} que permitan consolidar las ideas expuestas a lo largo del documento.

\section{El problema de Serre y las series formales torcidas}

En los 50's, J.P. Serre plantea como problema comprobar que cada fibrado vectorial sobre $A_{n}{k}$ (espacio afín del álgebra polinomial $A=k[t_{1},...,t_{n}]$, con $k$ cuerpo) es un fibrado trivial, o equivalentemente, que si al considerar un módulo proyectivo finitamente generado $P$ sobre el anillo $A=k[t_{1},...,t_{n}]$, entonces $P$ es libre. Diversos casos particulares fueron presentados reafirmando y condicionando este interrogante, para una descripción bien detallada e historicamente argumentada consultar \cite{lam}.\\

 El problema fue resuelto independientemente por D. Quillen y A. Suslin en enero de $1976$, para todo $n$, y para todo cuerpo $k$. Nuevas versiones han sido demostradas al plantear el problema de Serre en dos secciones:
\begin{enumerate}
  \item[i)] mostrar que todo $A$-módulo proyectivo es \textit{establemente libre}, lo cual se tiene probando que el grupo de Grothendieck $K_{0}(A)$ de módulos proyectivos es isomorfo a $\mathbb{Z}$.
  \item[ii)] mostrar que cualquier $A$-módulo proyectivo establemente libre es libre.
\end{enumerate}

El ánimo del presente artículo es ver bajo qué condiciones se tiene la parte $(i)$ del problema de Serre si consideramos un anillo $R$ (no necesariamente conmutativo) y $S=R[[x;\sigma,\delta]]$ la completación de su extensión de Ore $T=R[x;\sigma,\delta]$; es decir, ver cómo se hereda la propiedad $PSF$, (véase la definición \ref{p1}) del anillo base $R$ al anillo $S$ cuando se usa la técnica de graduación.\\

\subsection{El anillo de series formales torcidas}

Sea $R$ un anillo con identidad (no necesariamente conmutativo) y sean $\sigma$ un endomorfismo de $R$ y $\delta$ una $\sigma$-derivación (e.d., una aplicación que satisface $\delta(a+b)=\delta(a)+\delta(b)$ y $\delta(ab)=\sigma(a)\delta(b)+\delta(a)b$, para todo $a,b\in R$). Consideremos la extensión de Ore $T:=R[x;\sigma,\delta]$ del anillo $R$ (véase [\cite{Mc}]. Recordemos que $T$ es un anillo de tipo polinomial con producto dado por $xr=\sigma(r)x+ \delta(r)$, $\forall r\in R$.\\

El anillo de series de potencias formales torcidas
\begin{center}
  $R[[x;\sigma,\delta]]$
\end{center}
está definido como el anillo cuyo conjunto subyacente coincide con
el conjunto de series de potencias formales $R[[x]]$, pero donde la multiplicación verifica la relación
$xr=\sigma(r)x+ \delta(r)$, $\forall r\in R$, satisfaciendo la siguiente regla:
\begin{align*}
  \sum\limits_{i\geq 0}x^{i}a_{i}=\sum\limits_{j\geq 0}(\sum\limits_{i\geq
  j}M_{i-j,j}(\delta,\sigma)(a_{i}))x^{j}.
\end{align*}
 La multiplicación de dos elementos en $S$ está explícitmante dada por:
\begin{align*}
  (\sum\limits_{j\geq 0}a_{j}x^{j})(\sum\limits_{l\geq
  0}b_{l}x^{l})=\sum\limits_{m\geq 0}(\sum\limits_{n=0}^{m}\sum\limits_{j\geq
  n}a_{j}M_{j-n,n}(\delta,\sigma)(b_{m-n}))x^{m}.
\end{align*}
considerando $M_{k,l}(Y,Z)$ (véase \cite{SchV}) para cualquier entero $k,l\geq 0$, como la suma de todos los monomios no conmutativos en las dos variables $Y,Z$ con $k$ factores $Y$ y $l$ factores $Z$.\\

De lo anterior, se define a el anillo $S$ como el $R$-módulo izquierdo de todas
las series de potencias formales $\sum\limits_{i\geq 0}a_{i}x^{i}$
sobre $R$ en la variable $x$. Como el anillo $T$ es un $R$-submódulo de $S$, para extender su estructura de anillo por continuidad al anillo $S$, es necesario asegurar que las sumas que conforman los
coeficientes determinados por la fórmula anterior convergen a un
elemento del anillo $R$, razón por lo cual en la literatura se consideran los siguientes casos:
\begin{enumerate}
\item[i)] Al tomar $\delta=0$ entonces $x^{n}r=\sigma^{n}(r)x^{n}$ e.d. la
suma $\sum\limits_{i=0}^{n}a_{i}\sigma^{i}(b_{n-i})$ es finita en
$R$, por lo tanto el producto está bien definido y el anillo existe.

\item[ii)] Al tomar $R$ como anillo completo con respecto a la topología $I$-ádica, para $I$ algún ideal bilátero $\sigma$-estable $(\sigma(I)\subseteq I)$, y además tener que:
\begin{center}
  $\delta(R)\subseteq I,\,\,\delta(I)\subseteq I^{2}$.
\end{center}
Se obtiene por inducción sobre $k$ que $\delta(I^{k})\subseteq
I^{k+1} \,\forall k\geq 0$ ($I^{0}=R$ por definición); la condición
sobre $\delta$ se exige para poder controlar los términos libres
$r+\delta(r)+\delta^{2}(r)+\cdots$ que no necesariamente están
definidos en $R$.
\end{enumerate}

La condición a utilizar para la estructura $S$ en éste documento es la $ii)$, pues permite tener una visión global y completa de todos los términos. Considerando $\mathfrak{A_{k}}$ como el conjunto de todos los monomios no conmutativos en las variables $Y,Z$ con exactamente $k$ factores $Y$ y al conjunto $\mathfrak{A_{\geq l}}:=\bigcup\limits_{k\geq l} \mathfrak{A_{k}}$ motivamos la siguiente definición:

\begin{defin}
  Una $\sigma$-derivación $\delta$ es \textbf{$\sigma$-nilpotente} si para todo $n\geq 1$, existe
  $m\geq 1$ tal que $M(\delta,\sigma)(R)\subseteq I^{n}$ para cualquier $M\in \mathfrak{A_{\geq m}}$.
\end{defin}

\begin{remark}
Si $\delta$ satisface que $\delta(R)\subseteq I$ y $\delta(I)\subseteq I^{2}$ entonces $\delta(I^{n})\subseteq I^{n+1}$ es cierto para $n\geq 0$ y como $\sigma(I)=I$ se sigue que $\delta$ es $\sigma$-nilpotente. Condición suficiente para la existencia del anillo de series formales torcidas.
\end{remark}

En \cite{Ven} se muestra que $S$ es isomorfo como $R$-módulo al producto directo infinito de copias del anillo $R$ por potencias de la variable $x$ e.d. $S:=\prod\limits_{i=0}^{\infty}Rx^{i}$. Para el resto de ésta sección se asume que $R$ es un anillo local completo con respecto a la topología generada por su radical de Jacobson $J(R)$. Así, la filtración inducida por $J(R)$ sobre $R$ induce la filtración $G_{k}$ de $S$ determinada por:

\begin{align}
  G_{k}=\prod\limits_{i\geq 0}J(R)^{k-i}x^{i}, \forall k\geq 0.
\end{align}

\begin{lema}\label{q1}
Para $S=R[[x;\sigma,\delta]]$ se tiene que:
  \begin{enumerate}
    \item[i)] Cada $G_{k}$ es un ideal bilátero de $S$.
    \item[ii)] $\{G_{k}\}_{k\geq 0}$ es una filtración de $S$.
    \item[iii)] Si $\cap_{k\geq 0} J(R)^{k}=0$ entonces $S=\underleftarrow{\lim}S/G_{k}$.
    \item[iv)] El anillo graduado del anillo filtrado $S$ es
    \begin{center}
      $Gr_{G_{k}}(S)\cong (Gr_{J(R)}(R))[\overline{x};\overline{\sigma}]$, donde $\overline{x}$ denota el símbolo principal de $x$ en $Gr_{G_{k}}(S)$.
    \end{center}
  \end{enumerate}
\end{lema}

\begin{proof}
(i) Para ver que $G_{k}$ es ideal bilátero de $S$ basta verificar que el producto de un elemento de $S$ por un elemento de $G_{k}$ esta en $G_{k}$; sin embargo basta ver que sucede con productos de la variable $x$, entonces sea $f=\sum f_{i}x^{i}\in G_{k}$. Por la regla de
multiplicación de escalares en $S$ se tiene que $xf$ se descompone en la suma
\begin{center}
  $xf=\sum\limits_{i=0}^{\infty}\delta(f_{i})x^{i}+(\sum\limits_{i=1}^{\infty}\sigma(f_{i-1})x^{i-1})x$,
\end{center}
donde las constantes del primer sumando estan en $J(R)^{k+1}$ y las del segundo en $J(R)^{k}$ finalmente contenidos en $G_{k}$.\\
  
(ii) Nótese que $G_{k}S=\prod\limits_{i=0}^{\infty}J(R)^{k-i}x^{i}=(\prod\limits_{i=k}^{\infty}Rx^{i})\times J(R)x^{k-1}\times\cdots\times J(R)^{k}x^{0}$ por lo tanto el hecho de ser filtración se debe a que $\{J(R)^{k}\}_{k}$ es filtración para $R$ y por la regla de multiplicación de $S$.\\

(iii) Si $\cap_{k\geq 0}(J(R))^{k}=0$ entonces también $\cap_{k\geq
0}G_{k}=0$, y como $\ker(\phi)=\cap_{k\geq
0}G_{k}$ el homomorfismo de anillos
$\phi:S\longrightarrow \widehat{S}$ es un isomorfismo, de donde $S$ es un anillo completo respecto a la filtración $G_{k}$, así $S=\underleftarrow{\lim}S/G_{k}$.\\
(iv) Observemos que:
\begin{center}
  $G_{k}/G_{k+1}\cong \prod\limits_{i=0}^{k}(J(R)^{i}/J(R)^{i+1})\overline{x}^{k-i}$
\end{center}
se sigue que
\begin{align*}
  \bigoplus\limits_{k=0}^{\infty}G_{k}/G_{k+1}&\cong \bigoplus\limits_{i,l\geq
  0}J(R)^{i}/J(R)^{i+1}\overline{x}^{l}\\
  &\cong \bigoplus\limits_{i\geq 0}Gr_{J(R)}R \overline{x}^{i}\\
  &\cong (Gr_{J(R)}R)[\overline{x};\overline{\sigma}].
  \end{align*}
\end{proof}

\begin{lema}\label{q2}
Suponiendo que $\delta$ es $\sigma$-nilpotente con
$\delta(R)\subseteq J(R)$ y que $Gr_{J(R)}R$ es noetheriano, entonces $S$ es noetheriano.
\end{lema}

\begin{proof}
Es bien sabido que el anillo de polinomios torcidos $Gr_{G_{k}}S\cong Gr_{J(R)}R[\overline{x};\overline{\sigma}]$, sobre el anillo noetheriano $Gr_{J(R)}R$, es noetheriano, (véase \cite{Mc}). Como el anillo $S$ es completo con respecto a la topología $G_{k}$-ádica y es un hecho general que un anillo filtrado completo es noetheriano si su  anillo graduado asociado lo es, se sigue la propiedad.
\end{proof}

\subsection{Anillos regulares}

\begin{defin}
  Un anillo $R$ se dice \textrm{regular} a izquierda si cada $M\in \mathfrak{M}(R)$ (módulos f.g. sobre $R$) admite una resolución $0\rightarrow P_{n}\rightarrow \cdots \rightarrow P_{0}\rightarrow M\rightarrow 0$, donde $P_{i}\in \mathfrak{B}(R)$ (módulos proyectivos f.g. sobre $R$), y $n$ depende de $M$.
\end{defin}

Nótese que la definición anterior es equivalente a decir que cada $M\in \mathfrak{M}(R)$ tiene dimensión proyectiva finita (\cite{Mc}). El siguiente lema es clásico en estas estructuras y constituye un ejemplo del uso de la técnica de graduación como herramienta fundamental para demostrar algunas propiedades homológicas.

\begin{lema}\label{q3}
  Sea $R$ un anillo filtrado.
  \begin{enumerate}
    \item[i)] Si $Gr(R)$ es noetheriano a izquierda entonces $R$ es noetheriano a izquierda.
    \item[ii)] Si $Gr(R)$ es regular a izquierda, entonces $R$ es regular a izquierda.
  \end{enumerate}
\end{lema}

\section{El grupo de Grothendieck $K_{0}$}

Partiendo de la siguiente definición y haciendo uso de la K-teoría algebraica de Quillen se calculará el \textit{grupo de Grothendieck} $K_{0}(S)$ verificando que, bajo ciertas condiciones del anillo $R$, se tiene $K_{0}(S)\cong K_{0}(R)$.

\begin{defin}\label{p1}
  Un anillo $R$ cumple la propiedad $PSF$ si cada módulo proyectivo f.g. es \textbf{establemente libre}. Un $R$-módulo izquierdo $M$ es establemente libre si existen enteros $r,s \geq 0$ tales que $R^{r}\cong R^{s}\oplus M$.
\end{defin}

Para $P\in \mathfrak{B}(R)$, escribimos $(P)$ para la clase de isomorfismo de $P$, e.d. todos los $R$-módulos isomorfos a $P$. El \textit{grupo de Grothendieck} $K_{0}(R)$ es un grupo aditivo abeliano generado por las clases $(P)$ junto a cierta relación natural. Para precisarla, sean:

\begin{enumerate}
  \item[i)]$G:=$grupo abeliano libre generado por $(P)$: $P\in \mathfrak{B}(R)$,
  \item[ii)]$H:=$subgrupo de $G$ generado por $(P\oplus Q)-(P)-(Q)$: $P,Q\in \mathfrak{B}(R)$,
  \item[iii)] $K_{0}(R)=G/H$,
  \item[iv)] $[P]=$imagen de $(P)$ en $K_{0}(R)$.
\end{enumerate}

Entonces tenemos que $[P\oplus Q]=[P]+[Q]\in K_{0}(R)$ siempre que $P,Q\in \mathfrak{B}(R)$. De manera más general, si existe una sucesión exacta
\begin{align*}
  0\rightarrow P_{n} \rightarrow P_{n-1}\rightarrow \cdots \rightarrow P_{0} \rightarrow 0,
\end{align*}
donde $P_{i}\in \mathfrak{B}(R)$, entonces $\sum(-1)^{i}[P_{i}]=0\in K_{0}(R)$. En general un elemento de $K_{0}(R)$ tiene la forma:
\begin{align*}
  z&=[P_{1}]+\cdots + [P_{m}]- [Q]- \cdots - [Q_{n}]\\
  &= [P]-[Q],
\end{align*}
donde $P=P_{1}\oplus\cdots\oplus P_{m}$, $Q=Q_{1}\oplus\cdots\oplus Q_{n}$.

\begin{prop}
  Para $P,Q\in \mathfrak{B}(R)$, las siguientes afirmaciones son equivalentes:
  \begin{enumerate}
    \item[i)] $[P]=[Q]\in K_{0}(R)$;
    \item[ii)] Existe $T\in \mathfrak{B}(R)$ tal que $P\oplus T\cong Q\oplus T$ (e.d,$P,Q$ son establemente isomorfos);
    \item[iii)] Existe un número natural $t$ tal que $P\oplus R^{t}\cong Q\oplus R^{t}$.

  \end{enumerate}
\end{prop}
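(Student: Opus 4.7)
El plan es probar (iii)$\Rightarrow$(ii), (iii)$\Rightarrow$(i) y (ii)$\Rightarrow$(iii) mediante manipulaciones elementales, y concentrar el esfuerzo en el paso no trivial (i)$\Rightarrow$(ii). La implicación (iii)$\Rightarrow$(ii) es inmediata tomando $T:=R^{t}\in\mathfrak{B}(R)$. Para (iii)$\Rightarrow$(i) observaré que en $K_{0}(R)$ se tiene $[P]+t[R]=[P\oplus R^{t}]=[Q\oplus R^{t}]=[Q]+t[R]$, y la cancelación en el grupo abeliano $K_{0}(R)$ produce $[P]=[Q]$. Para (ii)$\Rightarrow$(iii) usaré el hecho estándar de que todo $T\in\mathfrak{B}(R)$ es sumando directo de un libre finitamente generado: existen $T'\in\mathfrak{B}(R)$ y $t\geq 0$ con $T\oplus T'\cong R^{t}$; sumando $T'$ a ambos miembros de $P\oplus T\cong Q\oplus T$ concluyo $P\oplus R^{t}\cong Q\oplus R^{t}$.

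El paso central, y previsiblemente el más delicado, será (i)$\Rightarrow$(ii). La hipótesis $[P]=[Q]\in G/H$ equivale a $(P)-(Q)\in H$, así que existen familias finitas $\{(A_{i},B_{i})\}_{i}$ y $\{(C_{j},D_{j})\}_{j}$ en $\mathfrak{B}(R)$ (con posibles repeticiones) tales que, separando los generadores de $H$ según el signo de su coeficiente y pasando términos, se obtiene la igualdad en $G$:
\[(P)+\sum_{i}(A_{i})+\sum_{i}(B_{i})+\sum_{j}(C_{j}\oplus D_{j})=(Q)+\sum_{i}(A_{i}\oplus B_{i})+\sum_{j}(C_{j})+\sum_{j}(D_{j}).\]
Como $G$ es libre abeliano sobre las clases de isomorfismo de $\mathfrak{B}(R)$, las clases que aparecen (contadas con multiplicidad) a ambos lados deben coincidir; y como la suma directa, salvo isomorfismo, solamente depende de las clases de sus sumandos y es asociativa y conmutativa módulo $\cong$, deduciré que las sumas directas subyacentes son $R$-módulos isomorfos. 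Definiendo $T:=\bigoplus_{i}(A_{i}\oplus B_{i})\oplus\bigoplus_{j}C_{j}\oplus\bigoplus_{j}D_{j}\in\mathfrak{B}(R)$, reconoceré el miembro izquierdo de dicha igualdad formal como (la clase de) $P\oplus T$ y el derecho como $Q\oplus T$, lo que establece (ii).

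El principal obstáculo es precisamente esa traducción de una identidad formal en $G$ a un isomorfismo genuino de $R$-módulos. Se supera gracias a la forma explícita de los generadores de $H$, que tras el reordenamiento permiten factorizar el mismo sumando $T\in\mathfrak{B}(R)$ a ambos lados, y a la libertad abeliana de $G$ sobre las clases de isomorfismo, la cual garantiza la igualdad a nivel de multiconjuntos de clases que se necesita para concluir el isomorfismo de módulos.
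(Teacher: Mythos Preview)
Tu propuesta es correcta y sigue esencialmente la misma estrategia que el artículo: las implicaciones fáciles se despachan de inmediato y el paso (i)$\Rightarrow$(ii) se prueba escribiendo $(P)-(Q)$ como combinación de generadores de $H$, reordenando los términos e invocando la libertad de $G$ para concluir que $\sum(M_\alpha)=\sum(N_\beta)$ en $G$ fuerza $\bigoplus M_\alpha\cong\bigoplus N_\beta$, con el mismo $T$ que en el artículo. La única diferencia es cosmética: tú explicitas (ii)$\Rightarrow$(iii) y (iii)$\Rightarrow$(i), mientras que el artículo simplemente declara que (iii)$\Leftrightarrow$(ii)$\Rightarrow$(i) es claro.
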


\begin{proof}
  Es claro que $iii)\Leftrightarrow ii)\Rightarrow i)$, entonces se necesita asumir $i)$, y probar $ii)$. Por $i)$, se tiene que $(P)-(Q)\in H$, entonces
  \begin{align*}
    (P)-(Q)=\sum_{i} {(P_{i}\oplus Q_{i})-(P_{i})-(Q_{i})}-\sum_{j}{(P'_{j}\oplus Q'_{j})-(P'_{j})-(Q'_{j})},
  \end{align*}
  donde todos los módulos proyectivos f.g. estan en $\mathfrak{B}(R)$. Luego,
  \begin{align*}
    (P)+\sum_{i}{(P_{i})+(Q_{i})}+\sum_{j}(P'_{j}\oplus Q'_{j})= (Q)+\sum_{i}(P_{i}\oplus Q_{i})+ \sum_{j}{(P'_{j})+(Q'_{j})}.
  \end{align*}
  Ahora, como $G$ es libre, $\sum(M_{\alpha})=\sum (N_{\beta})$ implica que $\bigoplus M_{\alpha}\cong \bigoplus N_{\beta}$. Por lo tanto, se obtiene que $P\oplus T\cong Q\oplus T$ con $T=\bigoplus_{i}{P_{i}\oplus Q_{i}}\oplus\bigoplus_{j}{P'_{j}\oplus Q'_{j}}$.
\end{proof}
\begin{corol}\label{q5}
   $P\in\mathfrak{B}(R)$ es establemente libre si, y solo si, $[P]\in\mathbb{Z}\cdot[R]$. (e.d., $K_{0}(R)= \mathbb{Z}\cdot[R]$ si, y solo si, todo $R$-módulo proyectivo f.g. es establemente libre.)
\end{corol}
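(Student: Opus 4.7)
La idea es aplicar directamente la proposición anterior sobre isomorfismos estables, notando que una clase $[P]$ está en $\mathbb{Z}\cdot[R]$ si y sólo si coincide con $[R^{n}]$ para algún $n\in\mathbb{Z}$ (entendiendo $[R^{n}]=n[R]$, con $n$ posiblemente negativo). La prueba consiste en establecer las dos implicaciones y luego observar que la afirmación entre paréntesis se sigue trivialmente.

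Para la dirección fácil, supongamos que $P$ es establemente libre, de modo que existen $r,s\geq 0$ con $R^{r}\cong R^{s}\oplus P$. Tomando clases en $K_{0}(R)$ y usando la aditividad $[M\oplus N]=[M]+[N]$ se obtiene $r[R]=s[R]+[P]$, de donde $[P]=(r-s)[R]\in\mathbb{Z}\cdot[R]$.

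Para la recíproca, supongamos que $[P]=n[R]$ para algún $n\in\mathbb{Z}$. Si $n\geq 0$ tenemos $[P]=[R^{n}]$, y por la parte $(iii)$ de la proposición anterior existe $t\geq 0$ tal que $P\oplus R^{t}\cong R^{n}\oplus R^{t}=R^{n+t}$, con lo cual $P$ es establemente libre. Si $n<0$ escribimos la igualdad como $[P]+[R^{-n}]=0=[\,0\,]$, es decir $[P\oplus R^{-n}]=[\,0\,]$; aplicando nuevamente la parte $(iii)$ obtenemos $t\geq 0$ con $P\oplus R^{-n}\oplus R^{t}\cong R^{t}$, y absorbiendo las potencias de $R$ resulta $P\oplus R^{t-n}\cong R^{t}$, mostrando que $P$ es establemente libre.

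Finalmente, la afirmación entre paréntesis se deduce de inmediato: como $K_{0}(R)$ está generado por las clases $[P]$ con $P\in\mathfrak{B}(R)$, si todo $P$ es establemente libre entonces cada generador cae en $\mathbb{Z}\cdot[R]$ por la primera dirección, y por lo tanto $K_{0}(R)=\mathbb{Z}\cdot[R]$; recíprocamente, si $K_{0}(R)=\mathbb{Z}\cdot[R]$, toda clase $[P]$ pertenece a $\mathbb{Z}\cdot[R]$ y la segunda dirección garantiza que $P$ es establemente libre. No preveo obstáculos reales; el único punto que requiere algo de cuidado es el caso $n<0$, donde hay que pasar el término $[R^{-n}]$ al otro lado antes de aplicar la proposición para evitar escribir un ``entero negativo de copias de $R$''.
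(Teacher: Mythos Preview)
Your proof is correct and follows essentially the same route as the paper: both directions rely on the additivity of $[\,\cdot\,]$ and Proposition~3.1(iii) applied to $[P]=[R^{k}]$ after adjusting by a suitable power of $R$. The only cosmetic difference is that the paper handles the converse uniformly by adding $R^{s}$ with $s\geq 0$ large enough to make $r+s\geq 0$, whereas you split into the cases $n\geq 0$ and $n<0$; your explicit treatment of the parenthetical statement is a welcome addition that the paper omits.
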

\begin{proof}
  Al considerar $P\oplus R^{m}\cong R^{n}$ con $m < n$, entonces $[P]=[R^{n}]-[R^{m}]=(n-m)\cdot[R]$. De manera análoga, si $[P]=r\cdot[R],r\in \mathbb{Z}$. Tomando $s\in \mathbb{Z}$ tal que $r+s\geq 0$, se tiene que $[P\oplus R^{s}]=(r+s)\cdot [R]=[R^{r+s}]\in K_{0}(R)$ y por la proposición 3.1., $P\oplus R^{s}\oplus R^{t}\cong R^{r+s+t}$ para algún $t$.
 \end{proof}
  
A continuación presentamos una herramienta muy poderosa debida a Quillen que sirve para calcular el grupo de Grothendieck de anillos filtrados.  

\begin{teor}{\textbf{Quillen's theorem.\\}}
  Sea $S$ un anillo filtrado, $B=Gr(S)$, y $R=S_{0}=B_{0}$. Supongamos que $B$ es un anillo noetheriano y regular a izquierda, además que $B$ es un $R$-módulo plano derecho. Entonces la inclusion $R\hookrightarrow S$ induce el isomorfismo $K_{0}(R)\cong K_{0}(S)$.
\end{teor}

\section{El teorema de Serre}

En esta última sección presentamos el resultado central del artículo. La prueba se basa en lo expuesto en las secciones anteriores y en el siguiente lema.

\begin{lema}\label{q4}
  Sea $R$ regular y noetheriano a izquierda, $\sigma$ automorfismo y $\delta$ una derivación $\sigma$-nilpotente con $\delta(R)\subseteq J(R)$ , $S=R[[x;\sigma,\delta]]$ filtrado con la filtración (1), $B=Gr(S)$ su anillo graduado asociado. Entonces $K_{0}(R)\cong K_{0}(S)$.
\end{lema}

\begin{proof}
  Por el lema \ref{q1} se sabe que $B=Gr(S)\cong(Gr_{J(R)}(R))[\overline{x};\overline{\sigma}]$. Utilizando los lemas \ref{q2} y \ref{q3} se tiene que $S$ es regular y noetheriano a izquierda. Además, $B_{0}=S_{0}=R$; $B$ es $R$-plano ya que es $R$-libre, entonces podemos aplicar el Lema de Quillen.
\end{proof}

\begin{teor}{\textbf{Serre's theorem.\\}}
  Sea $R$ regular y noetheriano a izquierda y $PSF$, $\sigma$ automorfismo y $\delta$ una derivación $\sigma$-nilpotente con $\delta(R)\subseteq J(R)$, entonces $S$ es $PSF$.
\end{teor}

\begin{proof}
  Es inmediato por el lema \ref{q4} que bajo las hipótesis $K_{0}(R)\cong K_{0}(S)$. Por tanto, si el anillo $R$ es $PSF$ entonces $K_{0}(R)=\mathbb{Z}\cdot [R]$ por el corolario \ref{q5}, y por el isomorfismo de anillos se infiere que $K_{0}(S)=\mathbb{Z}\cdot [R]$. Así, $S$ es $PSF$ si el anillo base $R$ también lo es.
\end{proof}

\end{document}